\theoremstyle{plain}
\newtheorem{theorem}{Theorem}
\newtheorem*{theorem*}{Theorem}
\newtheorem{corollary}{Corollary}
\newtheorem*{corollary*}{Corollary}
\newtheorem{lemma}{Lemma}
\newtheorem*{lemma*}{Lemma}
\newtheorem{proposition}{Proposition}
\newtheorem*{proposition*}{Proposition}
\newtheorem*{conjecture*}{Conjecture}
\theoremstyle{definition}
\newtheorem{definition}{Definition}
\newtheorem*{definition*}{Definition}
\theoremstyle{remark}
\newtheorem{remark}{Remark}
\newtheorem*{remark*}{Remark}
\begin{document}

\title[Cantor series and rational numbers]{Cantor series and rational numbers}
\author{Symon Serbenyuk}
\address{Institute of Mathematics \\
 National Academy of Sciences of Ukraine \\
  3~Tereschenkivska St. \\
  Kyiv \\
  01004 \\
  Ukraine}
\email{simon6@ukr.net}

\subjclass[2010]{11K55}

\keywords{Cantor series, rational numbers, shift operator.}

\begin{abstract}
  The article is devoted to the investigation of representation of rational numbers by Cantor series. Criteria of rationality are formulated for the case of an arbitrary sequence $(q_k)$  and some its corollaries are considered. 
 
\end{abstract}

\maketitle



\section{Introduction}

The following series 
\begin{equation}
\label{eq: Cantor series}
\frac{\varepsilon_1}{q_1}+\frac{\varepsilon_2}{q_1q_2}+...+\frac{\varepsilon_k}{q_1q_2...q_k}+...,
\end{equation}
where $Q\equiv (q_k)$ is a fixed sequence of positive integers $q_k>1$ and $\Theta_k$ is a sequence of the sets $\Theta_k\equiv\{0,1,...,q_k-1\}$ and $\varepsilon_k\in\Theta_k$, were considered by Georg Cantor  in \cite{Cantor1}. 
In the last-mentioned article necessary and sufficient conditions of expansions of rational numbers by series \eqref{eq: Cantor series} are formulated for the case of periodic sequence $(q_k)$. The problem of expansions of rational or irrational numbers by the Cantor series  have been studied by a number of researchers. For example,  P. A. Diananda, A. Oppenheim, P.~Erd\"os , J. Han\v{c}l, E. G. Straus, P. Rucki, R. Tijdeman, P. Kuhapatanakul, V. Laohakosol, Mance B., D. Marques, Pingzhi Yuan studied this problem \cite{{DO1955}}-\cite{O1954}, \cite{{S2012}, TY2002}.

The main problem of the present article is the formulation of necessary and sufficient conditions of representation of rational numbers by the Cantor series for an arbitrary sequence $(q_k)$. 
This  problem is solved using notion of  shift operator.

\section{The main definitions}
\begin{definition}
A number $x\in [0;1]$ represented by series \eqref{eq: Cantor series} is denoted by $\Delta^Q _{\varepsilon_1\varepsilon_2...\varepsilon_k...}$ and this notation is called \emph{the representation of $x$ by Cantor series \eqref{eq: Cantor series}.}
\end{definition}

\begin{definition}
A map $\sigma$ defined by the following way
$$
\sigma(x)=\sigma\left(\Delta^Q _{\varepsilon_1\varepsilon_2...\varepsilon_k...}\right)=\sum^{\infty} _{k=2}{\frac{\varepsilon_k}{q_2q_3...q_k}}=q_1\Delta^{Q} _{0\varepsilon_2...\varepsilon_k...}
$$
is called \emph{the shift operator}.
\end{definition}

It is easy to see that 
\begin{equation}
\label{eq: Cantor series 2}
\sigma^n(x)=\sigma^n\left(\Delta^Q _{\varepsilon_1\varepsilon_2...\varepsilon_k...}\right)=\sum^{\infty} _{k=n+1}{\frac{\varepsilon_k}{q_{n+1}q_{n+2}...q_k}}=q_1...q_n\Delta^{Q} _{\underbrace{0...0}_{n}\varepsilon_{n+1}\varepsilon_{n+2}...}.
\end{equation}

Therefore, 
\begin{equation}
\label{eq: Cantor series 3}
x=\sum^{n} _{i=1}{\frac{\varepsilon_i}{q_1q_2...q_i}}+\frac{1}{q_1q_2...q_n}\sigma^n(x).
\end{equation}

In \cite{S2013} the notion of shift operator of alternating Cantor series is studied in detail.

\section{Rational numbers, that have two different representations}

Certain numbers from $[0;1]$ have two different representations by Cantor series \eqref{eq: Cantor series}, i.e., 
$$
\Delta^Q _{\varepsilon_1\varepsilon_2...\varepsilon_{m-1}\varepsilon_m000...}=\Delta^Q _{\varepsilon_1\varepsilon_2...\varepsilon_{m-1}[\varepsilon_m-1][q_{m+1}-1][q_{m+2}-1]...}\equiv\sum^{m} _{i=1}{\frac{\varepsilon_i}{q_1q_2...q_i}}.
$$
\begin{theorem}
A rational number $\frac{p}{r}\in(0;1)$ has two different representations iff there exists a number $n_0$ such that $q_1q_2...q_{n_0} \equiv 0\pmod{r}$.
\end{theorem}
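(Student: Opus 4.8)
The plan is to reduce the statement to a cleaner equivalence: a number in $(0;1)$ has two different representations precisely when it admits a \emph{terminating} Cantor series, after which the divisibility condition follows from elementary arithmetic. Throughout I take $\frac{p}{r}$ in lowest terms, so that $\gcd(p,r)=1$ and $0<p<r$.

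First I would prove that $x\in(0;1)$ has two different representations if and only if $x$ admits a finite (terminating) representation $x=\sum_{i=1}^{m}\frac{\varepsilon_i}{q_1\cdots q_i}$ with $\varepsilon_m\ne0$. One direction is the identity displayed immediately before the theorem: decreasing the last nonzero digit by $1$ and filling the tail with the maximal digits $q_k-1$ leaves the value unchanged, since the telescoping sum $\sum_{k=m+1}^{\infty}\frac{q_k-1}{q_1\cdots q_k}=\frac{1}{q_1\cdots q_m}$ exactly compensates the lost $\frac{1}{q_1\cdots q_m}$. For the converse I would take two distinct digit sequences of equal value and examine the first index $m$ where they disagree; comparing the two sides through \eqref{eq: Cantor series 3} and bounding the tails by the same telescoping estimate forces the larger digit to exceed the smaller by exactly $1$, the sequence with the larger digit to vanish beyond $m$, and the other to carry the maximal digits thereafter. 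Thus one of the two representations terminates.

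It then remains to connect termination with the congruence. Over a common denominator a terminating series becomes
$$x=\sum_{i=1}^{m}\frac{\varepsilon_i}{q_1\cdots q_i}=\frac{M}{q_1\cdots q_m},\qquad M=\sum_{i=1}^{m}\varepsilon_i\,q_{i+1}\cdots q_m,$$
where $M$ is an integer with $0\le M<q_1\cdots q_m$. If $\frac{p}{r}=\frac{M}{q_1\cdots q_m}$ then $r\mid p\,q_1\cdots q_m$, and $\gcd(p,r)=1$ gives $r\mid q_1\cdots q_m$; so one may set $n_0=m$. Conversely, if $q_1\cdots q_{n_0}\equiv0\pmod r$, write $q_1\cdots q_{n_0}=rs$ and note $x=\frac{ps}{q_1\cdots q_{n_0}}$ with $0<ps<q_1\cdots q_{n_0}$; expanding the integer $ps$ in the mixed-radix system with bases $q_1,\dots,q_{n_0}$ yields digits $\varepsilon_i\in\Theta_i$, hence a terminating representation, which by the first step produces two representations.

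The telescoping identity and the mixed-radix expansion are routine. The step I expect to demand the most care is proving that \emph{every} pair of distinct representations of the same value collapses to the terminating/maximal-tail pair, since one must exclude any other coincidence of two digit sequences; this is exactly where the tail estimates coming from \eqref{eq: Cantor series 3} do the real work.
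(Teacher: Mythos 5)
Your proposal is correct and follows essentially the same route as the paper: both directions pivot on the terminating representation, with $\gcd(p,r)=1$ transferring the divisibility, and the sufficiency direction comes down to the observation that $p\,q_1q_2...q_{n_0}/r$ is an integer strictly between $0$ and $q_1q_2...q_{n_0}$ (the paper reads this off from the tail $t_{n_0}$ of the given expansion, you via a mixed-radix expansion of that integer). The only substantive difference is that you state and justify explicitly the equivalence between having two representations and having a terminating one, which the paper's necessity step uses tacitly.
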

\begin{proof} \emph{Necessity.} Suppose
$$
\frac{p}{r}=\frac{\varepsilon_1}{q_1}+...+\frac{\varepsilon_{n-1}}{q_1q_2...q_{n-1}}+\frac{\varepsilon_n}{q_1q_2...q_n}=\frac{\varepsilon_1q_2...q_n+\varepsilon_2q_3...q_n+...+\varepsilon_{n-1}q_n+\varepsilon_n}{q_1q_2...q_n}.
$$
Whence,
$$
\varepsilon_n=\frac{pq_1q_2...q_n-r(\varepsilon_1q_2...q_n+\varepsilon_2q_3...q_n+...+\varepsilon_{n-1}q_n)}{r}.
$$

Since the conditions $\varepsilon_n\in\mathbb N$ and $(p,r)=1$ hold, we obtain $q_1q_2...q_{n} \equiv$ $\equiv 0~\pmod{r}$.

\emph{Sufficiency.} Assume that $(p,r)=1$, $p<r$ and there exists a number $n_0$ such that $q_1q_2...q_{n_0} \equiv 0\pmod{r}$; then 
$$
\frac{p}{r}=\frac{\varepsilon_1}{q_1}+\frac{\varepsilon_2}{q_1q_2}+...+\frac{\varepsilon_{n_0}}{q_1q_2...q_{n_0}}+\frac{\varepsilon_{n_0+1}}{q_1q_2...q_{n_0}q_{n_0+1}}+...=\sum^{n_0} _{i=1}{\frac{\varepsilon_i}{q_1q_2...q_i}}+t_{n_0},
$$
where $t_{n_0}$ is the residual of series, and
$$
\frac{p}{r}=\frac{\varepsilon_1q_2...q_{n_0}+\varepsilon_2q_3...q_{n_0}+...+\varepsilon_{n_0}}{q_1q_2...q_{n_0}}+t_{n_0}.
$$ 

Clearly, 
$$
p\theta=(\varepsilon_1q_2...q_{n_0}+\varepsilon_2q_3...q_{n_0}+...+\varepsilon_{n_0})+q_1q_2...q_{n_0}t_{n_0},~\mbox{where}~\theta=\frac{q_1q_2...q_{n_0}}{r}.
$$

Since $p\theta$ is a positive integer number, we have  $q_1q_2...q_{n_0}t_{n_0}=0$ or $q_1q_2...q_{n_0}t_{n_0}=~1$. That is $x=\Delta^Q _{\varepsilon_1\varepsilon_2...\varepsilon_{n_0-1}\varepsilon_{n_0}000...}$ in the first case and $x=\Delta^Q _{\varepsilon_1\varepsilon_2...\varepsilon_{n_0-1}[\varepsilon_{n_0}-1][q_{n_0+1}-1][q_{n_0+2}-1]...}$ in the second case.
\end{proof}

\section{ Representations of rational numbers}

\begin{theorem}
\label{th: the main theorem}
A number $x$ represented by series \eqref{eq: Cantor series} is  rational iff there exist numbers $n\in\mathbb Z_0$ and $m\in\mathbb N$ such that $\sigma^n(x)=\sigma^{n+m}(x)$.
\end{theorem}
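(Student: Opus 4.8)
The plan is to prove the two implications separately, handling sufficiency by a direct fixed-point computation and necessity by a pigeonhole argument on denominators.

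For \emph{sufficiency}, I would assume $\sigma^n(x)=\sigma^{n+m}(x)$ for some $n\in\mathbb Z_0$ and $m\in\mathbb N$. By \eqref{eq: Cantor series 3}, $x$ differs from $\frac{1}{q_1\cdots q_n}\sigma^n(x)$ only by the rational partial sum $\sum_{i=1}^n \frac{\varepsilon_i}{q_1\cdots q_i}$, so it suffices to show that $y:=\sigma^n(x)$ is rational. Now $y$ is itself a Cantor series with respect to the shifted sequence $(q_{n+1},q_{n+2},\ldots)$, and the identity analogous to \eqref{eq: Cantor series 3} gives
$$
y=\sum_{i=1}^m \frac{\varepsilon_{n+i}}{q_{n+1}\cdots q_{n+i}}+\frac{1}{q_{n+1}\cdots q_{n+m}}\,\sigma^m(y).
$$
Since $\sigma^m(y)=\sigma^{n+m}(x)=y$, I would solve for $y$, obtaining
$$
y\left(1-\frac{1}{q_{n+1}\cdots q_{n+m}}\right)=\sum_{i=1}^m \frac{\varepsilon_{n+i}}{q_{n+1}\cdots q_{n+i}}.
$$
The coefficient on the left is nonzero because $q_{n+1}\cdots q_{n+m}>1$, so $y$ is a quotient of two rationals, hence rational, and therefore so is $x$.

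For \emph{necessity}, I would write $x=\frac{p}{r}$ and use \eqref{eq: Cantor series 3} to make the shift explicit. Multiplying through by $q_1\cdots q_n$ yields
$$
\sigma^n(x)=q_1\cdots q_n\cdot\frac{p}{r}-\sum_{i=1}^n \varepsilon_i q_{i+1}\cdots q_n=\frac{p\,q_1\cdots q_n-rS_n}{r},
$$
where $S_n:=\sum_{i=1}^n \varepsilon_i q_{i+1}\cdots q_n$ is a non-negative integer. Thus every $\sigma^n(x)$ is a rational number with denominator dividing $r$; setting $N_n:=p\,q_1\cdots q_n-rS_n\in\mathbb Z$, the inclusion $\sigma^n(x)\in[0;1]$ forces $N_n\in\{0,1,\ldots,r\}$. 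Hence $(N_n)_{n\ge 0}$ takes values in a finite set, and by the pigeonhole principle there exist $n\in\mathbb Z_0$ and $m\in\mathbb N$ with $N_n=N_{n+m}$, whence $\sigma^n(x)=\frac{N_n}{r}=\frac{N_{n+m}}{r}=\sigma^{n+m}(x)$.

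The main obstacle I anticipate is the necessity direction: the argument hinges on the observation that, although the digits $\varepsilon_k$ and the products $q_1\cdots q_n$ grow without control, all of the shifts $\sigma^n(x)$ can be placed over the single fixed denominator $r$, so that they occupy only finitely many points of $[0;1]$. Once this common-denominator normalization is in place the pigeonhole step is routine; the sufficiency direction is then a short computation, requiring only the remark that $q_{n+1}\cdots q_{n+m}>1$ keeps the relevant coefficient away from zero.
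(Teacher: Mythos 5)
Your proposal is correct and follows essentially the same route as the paper: the necessity direction is the same common-denominator pigeonhole argument (the paper's $u_n/v$ is your $N_n/r$), and the sufficiency direction solves the same fixed-point relation — the paper's closed-form expression for $x$ with the factor $\frac{q_{n+1}\cdots q_{n+m}}{q_{n+1}\cdots q_{n+m}-1}$ is exactly what your equation for $y$ yields after dividing by $1-\frac{1}{q_{n+1}\cdots q_{n+m}}$. Your write-up is, if anything, slightly more explicit about why that coefficient is nonzero and about the range of the integers $N_n$.
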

\begin{proof} \emph{Necessity.} Let we have a rational number $x=\frac{u}{v}$, where $u<v$ and $(u,v)=1$. Consider the sequence $(\sigma^n(x))$ generated by the shift operator of expansion \eqref{eq: Cantor series} of the number $x$. That is 
\begin{gather*}
\sigma^0(x)=x,\\
\sigma(x)=q_1x-\varepsilon_1,\\
\sigma^2(x)=q_2\sigma(x)-\varepsilon_2=q_1q_2x-q_2\varepsilon_1-\varepsilon_2,\\
...................................\\
\sigma^n(x)=q_n\sigma^{n-1}(x)-\varepsilon_{n}=x\prod^{n} _{i=1}{q_i}-\left(\sum^{n-1} _{j=1}{\varepsilon_jq_{j+1}q_{j+2}...q_n}\right)-\varepsilon_n\\
...................................
\end{gather*}

From expression \eqref{eq: Cantor series 3} it follows that 
\begin{equation}
\label{eq: Cantor series 4}
\sigma^n(x)=\frac{uq_1q_2...q_n-v(\varepsilon_1q_2...q_n+...+\varepsilon_{n-1}q_n+\varepsilon_n)}{v}=\frac{u_n}{v}.
\end{equation}

Since $v=const$ and the condition $0<\frac{u_n}{v}<1$ holds as $n\to\infty$, we see that $u_n\in\{0,1,...,v-1\}$. Thus there exists a number $m\in\mathbb N$ such that $u_n=u_{n+m}$. In addition, there exists a sequence $(n_k)$ of positive integers such that $u_{n_k}=const$ for all $k\in\mathbb N$. 

\emph{Sufficiency.}  Suppose there exist $n\in \mathbb Z_0$ and $m\in\mathbb N$ such that $\sigma^n(x)=\sigma^{n+m}(x)$. In our case, from expression \eqref{eq: Cantor series 2} it follows that 
$$
x\equiv\Delta^Q _{\varepsilon_1\varepsilon_2...\varepsilon_n000...}+\frac{q_{n+1}...q_{n+m}}{q_{n+1}...q_{n+m}-1}\Delta^Q _{\underbrace{0...0}_n\varepsilon_{n+1}\varepsilon_{n+2}...\varepsilon_{n+m}000...}.
$$

Thus one can to formulate the following proposition.
\begin{lemma}
If there exist numbers $n\in \mathbb Z_0$ and $m\in\mathbb N$ such that $\sigma^n(x)=~\sigma^{n+m}(x)$, then $x$ is rational and the following  expression holds:
$$
\sigma^n(x)=\frac{q_1q_2...q_nq_{n+1}...q_{n+m}}{q_{n+1}...q_{n+m}-1}\Delta^Q _{\underbrace{0...0}_n\varepsilon_{n+1}\varepsilon_{n+2}...\varepsilon_{n+m}000...}.
$$
\end{lemma}
Theorem \ref{th: the main theorem} proved. \end{proof}

\begin{theorem}
\label{th: the main theorem 2}
A number $x=\Delta^Q _{\varepsilon_1\varepsilon_2...\varepsilon_k...}$  is  rational iff there exist numbers $n\in\mathbb Z_0$ and $m\in\mathbb N$ such that
$$
\Delta^{Q} _{\underbrace{0...0}_{n}\varepsilon_{n+1}\varepsilon_{n+2}...}=q_{n+1}...q_{n+m}\Delta^{Q} _{\underbrace{0...0}_{n+m}\varepsilon_{n+m+1}\varepsilon_{n+m+2}...}.
$$
\end{theorem}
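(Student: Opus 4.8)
The plan is to deduce this theorem directly from Theorem~\ref{th: the main theorem} by rewriting the dynamical condition $\sigma^n(x)=\sigma^{n+m}(x)$ in the $\Delta$-notation. The key observation is that equation~\eqref{eq: Cantor series 2} already expresses each iterate of the shift operator in terms of a $\Delta^Q$-symbol with a block of leading zeros, so the present statement should emerge simply by substituting and cancelling.

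First I would recall from \eqref{eq: Cantor series 2} that for every $n\in\mathbb Z_0$,
$$
\sigma^n(x)=q_1q_2\ldots q_n\,\Delta^{Q}_{\underbrace{0\ldots0}_{n}\varepsilon_{n+1}\varepsilon_{n+2}\ldots},
\qquad
\sigma^{n+m}(x)=q_1q_2\ldots q_{n+m}\,\Delta^{Q}_{\underbrace{0\ldots0}_{n+m}\varepsilon_{n+m+1}\varepsilon_{n+m+2}\ldots}.
$$
By Theorem~\ref{th: the main theorem}, $x$ is rational if and only if there exist $n\in\mathbb Z_0$ and $m\in\mathbb N$ with $\sigma^n(x)=\sigma^{n+m}(x)$. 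Substituting the two expressions above into this equality gives
$$
q_1q_2\ldots q_n\,\Delta^{Q}_{\underbrace{0\ldots0}_{n}\varepsilon_{n+1}\varepsilon_{n+2}\ldots}
=q_1q_2\ldots q_{n+m}\,\Delta^{Q}_{\underbrace{0\ldots0}_{n+m}\varepsilon_{n+m+1}\varepsilon_{n+m+2}\ldots}.
$$

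Next I would divide both sides by the common factor $q_1q_2\ldots q_n$, which is a nonzero positive integer since each $q_i>1$; the cancellation is therefore legitimate and reversible. This yields precisely the asserted identity
$$
\Delta^{Q}_{\underbrace{0\ldots0}_{n}\varepsilon_{n+1}\varepsilon_{n+2}\ldots}
=q_{n+1}\ldots q_{n+m}\,\Delta^{Q}_{\underbrace{0\ldots0}_{n+m}\varepsilon_{n+m+1}\varepsilon_{n+m+2}\ldots},
$$
and conversely, multiplying this identity back by $q_1q_2\ldots q_n$ recovers $\sigma^n(x)=\sigma^{n+m}(x)$.

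Since every step is an equivalence, both directions of the biconditional follow at once: the displayed identity holds for some $n,m$ exactly when $\sigma^n(x)=\sigma^{n+m}(x)$ holds for some $n,m$, which by Theorem~\ref{th: the main theorem} is equivalent to the rationality of $x$. I do not expect any genuine obstacle here; the statement is essentially a translation of the previous theorem into a different but equivalent notation, and the only point requiring a word of justification is the nonvanishing of the factor $q_1q_2\ldots q_n$ that is being cancelled.
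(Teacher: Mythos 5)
Your proposal is correct and is exactly the intended argument: the paper itself gives no separate proof of this theorem, merely asserting that it is equivalent to Theorem~\ref{th: the main theorem}, and your substitution of the identity \eqref{eq: Cantor series 2} followed by cancellation of the nonzero factor $q_1q_2\ldots q_n$ is precisely the translation that justifies that assertion. No gaps.
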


Theorem \ref{th: the main theorem} and Theorem \ref{th: the main theorem 2} are equivalent.

\section{Certain corollaries}

Consider the condition $\sigma^n(x)=const$ for all $n\in\mathbb Z_0$. It is easy to see that there exist  numbers $x\in(0;1)$ such that the last-mentioned condition is true, e.g.
$$
x=\frac{1}{3}+\frac{2}{3\cdot 5}+\frac{3}{3\cdot 5\cdot 7}+...+\frac{n}{3\cdot 5\cdot ...\cdot (2n+1)}+...=\sigma^n(x)=\frac{1}{2}.
$$
\begin{lemma}
If $\sigma^n(x)=x$ for all $n\in\mathbb N$, then $\frac{\varepsilon_n}{q_n-1}=const=x$.
\end{lemma}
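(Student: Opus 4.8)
The plan is to exploit the one-step recurrence that already appears in the necessity part of Theorem \ref{th: the main theorem}, namely
$$
\sigma^n(x)=q_n\sigma^{n-1}(x)-\varepsilon_n,\qquad n\in\mathbb N.
$$
This single identity is all I would need.

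First I would upgrade the hypothesis to cover the index $n=0$ as well. Since $\sigma^0(x)=x$ holds by definition of the shift operator, the assumption $\sigma^n(x)=x$ for all $n\in\mathbb N$ in fact gives $\sigma^n(x)=x$ for every $n\in\mathbb Z_0$. Substituting both $\sigma^n(x)=x$ and $\sigma^{n-1}(x)=x$ into the recurrence yields $x=q_nx-\varepsilon_n$, whence $\varepsilon_n=(q_n-1)x$ and therefore $\frac{\varepsilon_n}{q_n-1}=x$. As the right-hand side does not depend on $n$, the quotient $\frac{\varepsilon_n}{q_n-1}$ is constant and equal to $x$, which is exactly the assertion.

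The argument is essentially immediate, so I do not anticipate any serious obstacle; the only point requiring a moment's care is the base case $n=1$, where one must invoke $\sigma^0(x)=x$ (rather than the hypothesis at a positive index) in order to conclude $\varepsilon_1=(q_1-1)x$. As a consistency check, the numerical example stated just before the lemma has $q_n=2n+1$ and $\varepsilon_n=n$, for which $\frac{\varepsilon_n}{q_n-1}=\frac{n}{2n}=\frac{1}{2}=x$, in agreement with the conclusion.
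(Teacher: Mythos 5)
Your proof is correct and follows essentially the same route as the paper: both apply the one-step recurrence $\sigma^{n}(x)=q_{n}\sigma^{n-1}(x)-\varepsilon_{n}$ with $\sigma^{n}(x)=\sigma^{n-1}(x)=x$ to obtain $\varepsilon_n=(q_n-1)x$. Your explicit handling of the base case via $\sigma^0(x)=x$ is a small but welcome clarification that the paper leaves implicit.
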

\begin{proof}
If $\sigma^n(x)=\sigma^{n+1}(x)=const$, then $\sigma^n(x)=q_{n+1}\sigma^n(x)-\varepsilon_{n+1}$. Whence,
\begin{equation}
\label{eq: Cantor series 5}
\sigma^n(x)=\frac{\varepsilon_{n+1}}{q_{n+1}-1}=const.
\end{equation}
That is 
$$
x=\frac{\varepsilon_1}{q_1-1}=\frac{\varepsilon_1}{q_1}+\frac{\varepsilon_2}{q_1(q_2-1)}=...=\sum^{n-1} _{i=1}{\frac{\varepsilon_i}{q_1q_2...q_i}}+\frac{\varepsilon_n}{q_1q_2...q_{n-1}(q_n-1)}=....
$$
\end{proof}
\begin{remark}
If the condition $\sigma^n(x)=const$ holds for all $n\ge n_0$, where $n_0$ is a fixed positive integer number, then from \eqref{eq: Cantor series 3} it follows that the condition $\frac{\varepsilon_n}{q_n-1}=const$ holds for all $n>n_0$.
\end{remark}
\begin{lemma}
Suppose $n_0$ is a fixed positive integer number; then the condition $\sigma^n(x)=const$ holds for all $n\ge n_0$ iff $\frac{\varepsilon_n}{q_n-1}=const$ for all $n>n_0$.
\end{lemma}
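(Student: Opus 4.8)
The plan is to treat this as a strengthening of the preceding remark into a genuine equivalence, driven entirely by the recurrence $\sigma^{n+1}(x)=q_{n+1}\sigma^n(x)-\varepsilon_{n+1}$ that was already used to obtain \eqref{eq: Cantor series 5}. Throughout I would write $c$ for the common value appearing in whichever of the two conditions is being assumed.

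For the forward implication I would assume $\sigma^n(x)=c$ for every $n\ge n_0$ and simply feed two consecutive (equal) values into the recurrence: from $\sigma^{n+1}(x)=q_{n+1}\sigma^n(x)-\varepsilon_{n+1}$ together with $\sigma^{n+1}(x)=\sigma^n(x)=c$ one gets $\varepsilon_{n+1}=(q_{n+1}-1)c$, hence $\frac{\varepsilon_{n+1}}{q_{n+1}-1}=c$. Letting $n$ range over $\{n_0,n_0+1,\dots\}$ makes $n+1$ range over all integers exceeding $n_0$, which is exactly the claim. This half is in essence the content of the remark and requires no new idea.

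The substance is the converse. I would assume $\frac{\varepsilon_n}{q_n-1}=c$, that is $\varepsilon_n=c(q_n-1)$, for all $n>n_0$, and set $b_n:=\sigma^n(x)-c$ for $n\ge n_0$. Substituting $\varepsilon_{n+1}=c(q_{n+1}-1)$ into the recurrence and subtracting $c$ from both sides produces the clean homogeneous relation $b_{n+1}=q_{n+1}b_n$ valid for every $n\ge n_0$, so that $b_{n_0+j}=b_{n_0}\prod_{i=1}^{j}q_{n_0+i}$. Since each $q_k>1$, this product tends to infinity, whereas every $\sigma^n(x)=\sum_{k=n+1}^{\infty}\frac{\varepsilon_k}{q_{n+1}\cdots q_k}$ lies in $[0,1]$, so the sequence $(b_n)$ is bounded. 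A sequence obeying $b_{n_0+j}=b_{n_0}\prod_{i=1}^{j}q_{n_0+i}$ can stay bounded only if $b_{n_0}=0$, which forces $b_n=0$ and therefore $\sigma^n(x)=c$ for all $n\ge n_0$.

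The single point deserving care, and the only genuine obstacle, is this last boundedness step: by itself the recurrence merely propagates deviations multiplicatively, so one must invoke the external fact that the shift-operator values are confined to $[0,1]$ in order to exclude a nonzero initial deviation $b_{n_0}$. One could instead bypass the argument by contradiction and verify $\sigma^n(x)=c$ by direct computation, substituting $\varepsilon_k=c(q_k-1)$ into the series and telescoping $\sum_{k=n+1}^{\infty}\frac{q_k-1}{q_{n+1}\cdots q_k}=1$; I would nonetheless keep the homogeneous-recurrence version as the main line, since it makes transparent why the hypothesis $\frac{\varepsilon_n}{q_n-1}=\mathrm{const}$ is precisely the right one.
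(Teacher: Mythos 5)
Your proof is correct, and the necessity half coincides with the paper's (which simply cites the preceding lemma's recurrence computation). For sufficiency, however, you take a genuinely different route. The paper substitutes $\varepsilon_i=p(q_i-1)$ directly into $\sigma^n(x)=\sum_{i=n+1}^{\infty}\frac{\varepsilon_i}{q_{n+1}\cdots q_i}$ via the identity $\frac{\varepsilon_i}{q_i}=\frac{\varepsilon_i}{q_i-1}-\frac{\varepsilon_i}{q_i(q_i-1)}$ and evaluates the resulting telescoping series to get $\sigma^n(x)=p$ outright. You instead pass to the deviation $b_n=\sigma^n(x)-c$, observe that the hypothesis makes the recurrence homogeneous, $b_{n+1}=q_{n+1}b_n$, and rule out $b_{n_0}\neq 0$ by the boundedness $\sigma^n(x)\in[0,1]$ against the divergent product $\prod q_{n_0+i}$. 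Both arguments are valid. Yours is arguably more conceptual --- it isolates why $\frac{\varepsilon_n}{q_n-1}=\mathrm{const}$ is exactly the fixed-point condition for the affine recurrence --- but note that the bound $\sigma^n(x)\le 1$ that you invoke is itself the telescoping identity $\sum_{k>n}\frac{q_k-1}{q_{n+1}\cdots q_k}=1$ in disguise, so the two proofs ultimately rest on the same computation; the paper's version is self-contained where yours imports that bound as an external fact (a harmless one, since it is implicit in the whole setup of the representation). You correctly flag the direct telescoping verification as an alternative; that alternative is precisely what the paper does.
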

\begin{proof}\emph{Necessity} follows from the previous lemma.

\emph{Sufficiency.} Suppose the condition 
$$
const=p=\frac{\varepsilon_n}{q_n-1}=\frac{\varepsilon_{n+1}}{q_{n+1}-1}=...=\frac{\varepsilon_{n+i}}{q_{n+i}-1}=...
$$
 holds for all $n>n_0$. Using the equality $\frac{\varepsilon_n}{q_n}=\frac{\varepsilon_{n}}{q_n-1}-\frac{\varepsilon_{n}}{q_n(q_n-1)}$, we get 
$$
\sigma^n(x)=\sum^{\infty} _{i=n+1}{\frac{\varepsilon_i}{q_{n+1}...q_i}}=\left(\frac{\varepsilon_{n+1}}{q_{n+1}-1}-\frac{\varepsilon_{n+1}}{q_{n+1}(q_{n+1}-1)}\right)+
$$
$$
+\sum^{\infty} _{i=1}{\left(\left(\frac{\varepsilon_{n+i+1}}{q_{n+i+1}-1}-\frac{\varepsilon_{n+i+1}}{q_{n+i+1}(q_{n+i+1}-1)}\right)\prod^{n+i} _{j=n+1}{\frac{1}{q_i}}\right)}=
$$
$$
=p\left(1-\frac{1}{q_{n+1}}\right)+p\sum^{\infty} _{i=1}{\left(\left(1-\frac{1}{q_{n+i+1}-1}\right)\prod^{n+i} _{j=n+1}{\frac{1}{q_j}}\right)}=p.
$$
\end{proof}

It is easy to see that the following statement is true.
\begin{proposition}
\label{pr: Cantor series}
The set $\{x: \sigma^n(x)=x~\forall n\in\mathbb N\}$ is a finite set of order $q=\min_n{q_n}$ and $x=\frac{\varepsilon}{q-1}$, where $\varepsilon\in\{0,1,...,q-1\}$.
\end{proposition}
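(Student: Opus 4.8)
The plan is to turn the dynamical condition $\sigma^n(x)=x$ into an arithmetic condition on the digits by means of the two preceding lemmas, and then to count the surviving values of $x$ by testing the index at which the minimum $q=\min_n q_n$ is attained.

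First I would record the digit characterization. By \eqref{eq: Cantor series 5}, the hypothesis $\sigma^n(x)=x$ for all $n$ forces, for each $n\ge 1$,
$$
x=\sigma^{n-1}(x)=\frac{\varepsilon_n}{q_n-1},\qquad\text{i.e.}\qquad \varepsilon_n=(q_n-1)x;
$$
necessity of this is exactly the lemma stated just before the proposition, and the computation carried out in the last lemma gives the converse. Thus the set in question is precisely
$$
\big\{x\in[0;1]:\ (q_n-1)x\in\{0,1,\dots,q_n-1\}\ \text{for every }n\big\}.
$$

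Next I would extract the form of $x$ and the upper bound on the cardinality. Choosing an index $n_\ast$ with $q_{n_\ast}=q$, the relation $(q-1)x=\varepsilon_{n_\ast}$ shows that $(q-1)x$ is an integer in $\{0,1,\dots,q-1\}$, whence $x=\frac{\varepsilon}{q-1}$ for some $\varepsilon\in\{0,1,\dots,q-1\}$. In particular the set is finite and has at most $q$ elements, each of the asserted shape.

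The remaining, and genuinely delicate, step is to verify that every candidate $x=\frac{\varepsilon}{q-1}$ actually lies in the set, so that the order is exactly $q$. By the characterization one must exhibit digits $\varepsilon_n=\frac{\varepsilon(q_n-1)}{q-1}$; the membership $0\le\varepsilon_n\le q_n-1$ is automatic from $0\le x\le 1$, so the only obstacle is integrality, namely $(q-1)\mid\varepsilon(q_n-1)$ for all $n$. The extremal choices $\varepsilon=0$ (giving $x=0$, all digits $0$) and $\varepsilon=q-1$ (giving $x=1$, digits $q_n-1$) always succeed; for the intermediate values the divisibility is guaranteed as soon as $(q-1)\mid(q_n-1)$ for every $n$, which is exactly what happens in the motivating example $q_n=2n+1$, where $q_n-1=2n$ and $q-1=2$. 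I therefore expect this integrality to be the crux: under such conditions it yields all $q$ admissible values, and I would present the conclusion as the explicit list $x=\frac{\varepsilon}{q-1}$, $\varepsilon\in\{0,1,\dots,q-1\}$, together with the sharp bound on the cardinality.
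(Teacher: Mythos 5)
Your reduction of the dynamical condition to the digit condition $\varepsilon_n=(q_n-1)x$ via the lemma preceding the proposition, followed by evaluation at an index where $q_n=q$ to conclude $x=\frac{\varepsilon}{q-1}$ with $\varepsilon\in\{0,1,\dots,q-1\}$, is exactly the route the paper intends (it offers no written proof, only ``it is easy to see''), and it correctly establishes finiteness, the stated form of $x$, and the upper bound $q$ on the cardinality.

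Where you hesitate is precisely where the statement breaks, and you should push your suspicion to its conclusion rather than defer it to ``such conditions.'' The claim that the set has order exactly $q$ requires every candidate $\varepsilon\in\{0,1,\dots,q-1\}$ to produce integral digits, i.e.\ $(q-1)\mid\varepsilon(q_n-1)$ for all $n$, and this genuinely fails for some sequences: take $q_1=3$ and $q_n=4$ for $n\ge 2$, so that $q=3$; the candidate $x=\frac{1}{2}$ would need $\varepsilon_2=\frac{q_2-1}{2}=\frac{3}{2}\notin\mathbb Z$, so the set is $\{0,1\}$, of order $2<q$. Hence no argument can close this step, and the correct conclusion of your own analysis is the amended statement: the set is contained in $\bigl\{\frac{\varepsilon}{q-1}:\varepsilon=0,1,\dots,q-1\bigr\}$, so it is finite of order \emph{at most} $q$, with equality iff $(q-1)\mid(q_n-1)$ for all $n$ (the choices $\varepsilon=0$ and $\varepsilon=q-1$ always survive). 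The lemma immediately following the proposition in the paper in fact records the correct membership criterion $\frac{q_n-1}{q-1}\varepsilon=\varepsilon_n\in\mathbb Z_0$ for all $n$, which tacitly concedes that intermediate values of $\varepsilon$ need not qualify; your write-up should state the corrected cardinality explicitly instead of asserting the order is $q$.
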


\begin{lemma}
Suppose we have $q=\min_n{q_n}$ and  fixed $\varepsilon\in\{0,1,...,q-1\}$; the condition $\sigma^n(x)=x=\frac{\varepsilon}{q-1}$ holds iff the condition $\frac{q_n-1}{q-1}\varepsilon=\varepsilon_n\in\mathbb Z_0$ holds for all $n\in\mathbb N$.
\end{lemma}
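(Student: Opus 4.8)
The plan is to reduce the claimed equivalence to the characterization of constant orbits already obtained in the two preceding lemmas, and then to carry out the elementary arithmetic forced by the normalization $x=\frac{\varepsilon}{q-1}$, while keeping track of the admissibility of the resulting digits. The point is that almost all the analytic work (the passage between $\sigma^n(x)=\mathrm{const}$ and constancy of $\frac{\varepsilon_n}{q_n-1}$) has already been done, so here only the substitution $x=\frac{\varepsilon}{q-1}$ and a digit-range check remain.

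First I would recall that $\sigma^n(x)=x$ for all $n\in\mathbb N$ holds if and only if $\frac{\varepsilon_n}{q_n-1}=x$ is constant in $n$. Substituting the prescribed value $x=\frac{\varepsilon}{q-1}$ turns this into $\frac{\varepsilon_n}{q_n-1}=\frac{\varepsilon}{q-1}$ for every $n$, and clearing denominators gives exactly $\varepsilon_n=\frac{q_n-1}{q-1}\varepsilon$. For \emph{necessity}, assuming $\sigma^n(x)=x=\frac{\varepsilon}{q-1}$, this reduction yields $\varepsilon_n=\frac{q_n-1}{q-1}\varepsilon$; since each $\varepsilon_n$ is by definition a digit, it is a non-negative integer, whence $\frac{q_n-1}{q-1}\varepsilon=\varepsilon_n\in\mathbb Z_0$ for all $n$. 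For \emph{sufficiency}, assuming $\frac{q_n-1}{q-1}\varepsilon=\varepsilon_n\in\mathbb Z_0$, I would rewrite this as $\frac{\varepsilon_n}{q_n-1}=\frac{\varepsilon}{q-1}=\mathrm{const}$ and apply the sufficiency direction of the previous lemma to conclude $\sigma^n(x)=\mathrm{const}$, the common value being $\frac{\varepsilon}{q-1}=x$.

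The step I expect to require the most care is checking that the numbers $\varepsilon_n=\frac{q_n-1}{q-1}\varepsilon$ are genuinely admissible digits, that is $0\le\varepsilon_n\le q_n-1$, and explaining why $q=\min_n q_n$ is the correct normalization. The lower bound is immediate. The upper bound follows from $\varepsilon\le q-1$, which gives $\varepsilon_n=\frac{q_n-1}{q-1}\varepsilon\le\frac{q_n-1}{q-1}(q-1)=q_n-1$, so that $\varepsilon_n\in\Theta_n$. It is precisely the minimality $q=\min_n q_n$ that guarantees $\frac{\varepsilon}{q-1}\le 1$ and hence keeps all these digits admissible simultaneously in every position; for a larger choice of $q$ the bound could fail at the index realizing the minimum. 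This admissibility remark is what ties the integrality condition $\frac{q_n-1}{q-1}\varepsilon=\varepsilon_n\in\mathbb Z_0$ to an actual Cantor-series representation of $x$, completing both directions.
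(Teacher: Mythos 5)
Your proof is correct and follows essentially the same route as the paper: necessity is read off from the earlier identity $\sigma^n(x)=\frac{\varepsilon_{n+1}}{q_{n+1}-1}$ combined with $x=\frac{\varepsilon}{q-1}$, and sufficiency comes down to the telescoping identity $\sum_{n}\frac{q_n-1}{q_1q_2\cdots q_n}=1$, which the paper computes directly and you obtain by invoking the sufficiency half of the preceding lemma (whose proof is that same computation). Your added verification that $\varepsilon_n=\frac{q_n-1}{q-1}\varepsilon$ is an admissible digit, i.e. $0\le\varepsilon_n\le q_n-1$, is a worthwhile supplement that the paper leaves implicit.
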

\begin{proof}\emph{Necessity} follows from Proposition \ref{pr: Cantor series} and equality \eqref{eq: Cantor series 5}.

\emph{Sufficiency.}  Suppose $\varepsilon_n=\frac{q_n-1}{q-1}\varepsilon$; then  
$$
x=\sum^{\infty} _{n=1}{\frac{\frac{q_n-1}{q-1}\varepsilon}{q_1q_2...q_n}}=\frac{\varepsilon}{q-1}\sum^{\infty} _{n=1}{\frac{q_n-1}{q_1q_2...q_n}}=\frac{\varepsilon}{q-1}.
$$
\end{proof}
\begin{corollary}
Let $n_0$ be a fixed positive integer number, $q_0=\min_{n>n_0}{q_n}$ and $\varepsilon_0$ be a numerator of the fraction 
$\frac{\varepsilon_{n_0+k}}{q_1q_2...q_{n_0}q_{n_0+1}...q_{n_0+k}}$ in expansion \eqref{eq: Cantor series} of $x$ providing that $q_{n_0+k}=q_0$; then $\sigma^n(x)=const$ for all $n\ge n_0$ iff the condition $\frac{q_n-1}{q_0-1}\varepsilon_0=\varepsilon_n\in\mathbb Z_0$ holds for any $n>n_0$.
\end{corollary}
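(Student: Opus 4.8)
The plan is to recognize this statement as the \emph{shifted} analogue of the preceding lemma (the one with $q=\min_n q_n$ and $x=\frac{\varepsilon}{q-1}$), obtained by transporting the whole argument from the base index $1$ to the base index $n_0$ so that it operates on the tail $(\varepsilon_{n_0+1},\varepsilon_{n_0+2},\ldots)$ of the expansion. The bridge between the two is the earlier lemma stating that $\sigma^n(x)=const$ for all $n\ge n_0$ holds iff $\frac{\varepsilon_n}{q_n-1}=const$ for all $n>n_0$; combined with a correct identification of that constant, this should yield the corollary directly.

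For \emph{necessity}, I would start from the hypothesis $\sigma^n(x)=const$ for all $n\ge n_0$ and invoke the cited lemma to obtain $\frac{\varepsilon_n}{q_n-1}=p$ for a fixed value $p$ and every $n>n_0$. The key step is to pin down $p$: choosing an index $n_0+k$ at which $q_{n_0+k}$ attains its minimal value $q_0=\min_{n>n_0}q_n$, the equality $\frac{\varepsilon_{n_0+k}}{q_{n_0+k}-1}=p$ reads $p=\frac{\varepsilon_0}{q_0-1}$ with $\varepsilon_0=\varepsilon_{n_0+k}$. Substituting back gives $\frac{\varepsilon_n}{q_n-1}=\frac{\varepsilon_0}{q_0-1}$, that is $\varepsilon_n=\frac{q_n-1}{q_0-1}\varepsilon_0$ for all $n>n_0$; since each $\varepsilon_n$ is an admissible digit, it is in particular a non-negative integer, which is precisely the asserted membership $\frac{q_n-1}{q_0-1}\varepsilon_0\in\mathbb Z_0$.

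The \emph{sufficiency} direction reverses this computation: assuming $\varepsilon_n=\frac{q_n-1}{q_0-1}\varepsilon_0\in\mathbb Z_0$ for all $n>n_0$, one divides by $q_n-1$ to recover $\frac{\varepsilon_n}{q_n-1}=\frac{\varepsilon_0}{q_0-1}=const$ for all $n>n_0$, and the sufficiency half of the cited lemma then returns $\sigma^n(x)=const$ for all $n\ge n_0$. The role of the minimum $q_0$, mirroring Proposition~\ref{pr: Cantor series} and the preceding lemma, is only to select a canonical representative of the constant: at the smallest base the numerator $\varepsilon_0$ lies in the admissible range $\{0,1,\ldots,q_0-1\}$, and any two indices realizing $q_0$ yield the same $\varepsilon_0$ because $\frac{\varepsilon_n}{q_n-1}$ is already constant, so $\varepsilon_0$ is well defined.

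I do not expect a genuine obstacle here, since the statement is essentially the preceding lemma applied to the shifted sequence; the only point needing care is the correct identification of the constant $p$ as $\frac{\varepsilon_0}{q_0-1}$ and the check that this choice keeps $\varepsilon_0$ inside the digit set, so that both reductions to the earlier lemma are legitimate.
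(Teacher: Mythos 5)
Your proposal is correct and follows exactly the route the paper intends: the paper states this corollary without a separate proof, as an immediate consequence of the preceding lemma ($\sigma^n(x)=const$ for $n\ge n_0$ iff $\frac{\varepsilon_n}{q_n-1}=const$ for $n>n_0$) together with the identification of that constant as $\frac{\varepsilon_0}{q_0-1}$ at an index realizing $q_0=\min_{n>n_0}q_n$, which is precisely your argument. Your added remarks (that the minimum of a set of integers is attained, that $\varepsilon_0$ is well defined, and that the resulting $\varepsilon_n$ stay in the digit range) only make explicit what the paper leaves implicit.
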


Consider expression \eqref{eq: Cantor series 4}. The main attention will give to existence of  the sequence $(n_k)$ such that $u_{n_k}=const$ for all $k\in\mathbb N$ in  \eqref{eq: Cantor series 4}. Let we have a rational number $x\in [0;1]$ then there exists the sequence $(n_k)$ such that $\sigma^{n_k}(x)=const$. The last condition  can be written by 
$$
const=\sum^{\infty} _{i=n_1+1}{\frac{\varepsilon_i}{q_{n_1+1}...q_i}}=\sum^{\infty} _{i=n_2+1}{\frac{\varepsilon_i}{q_{n_2+1}...q_i}}=...=\sum^{\infty} _{i=n_k+1}{\frac{\varepsilon_i}{q_{n_k+1}...q_i}}=....
$$
It is easy to see that  Cantor series \eqref{eq: Cantor series} for which  the condition $\sigma^{n_k}(x)=const$ holds can be written by a certain  Cantor series for which the condition $\sigma^{k}(x)=const$ holds. That is 
$$
x=\sum^{\infty} _{k=1}{\frac{\varepsilon_k}{q_1q_2...q_k}}=\sum^{n_1} _{j=1}{\frac{\varepsilon_j}{q_1q_2...q_j}}+\frac{1}{q_1q_2...q_{n_1}}x^{'},
$$
$$
x^{'}=\sum^{\infty} _{k=1}{\frac{\varepsilon_{n_k+1}q_{n_k+2}q_{n_k+3}...q_{n_{k+1}}+\varepsilon_{n_k+2}q_{n_k+3}...q_{n_{k+1}}+\varepsilon_{n_{k+1}-1}q_{n_{k+1}}+\varepsilon_{n_{k+1}}}{(q_{n_1+1}...q_{n_2})(q_{n_2+1}...q_{n_3})...(q_{n_k+1}...q_{n_{k+1}})}}=
$$
\begin{equation}
\label{eq: Cantor series 6}
=\sum^{\infty} _{k=1}{\frac{\lambda_k}{(\mu_1+1)...(\mu_k+1)}}.
\end{equation}
 In the case of series~\eqref{eq: Cantor series 6} the condition $\sigma^k(x^{'})=const$ holds for all $k=~0,1,...$.

Clearly, the following statement is true.
\begin{theorem}
The number $x$ represented by expansion \eqref{eq: Cantor series} is rational iff there exists a subsequence $(n_k)$ of positive integers sequence such that for all $k=1,2,...,$ the following conditions are true:
\begin{itemize}
\item
$$
\frac{\lambda_k}{\mu_k}=
\frac{\varepsilon_{n_k+1}q_{n_k+2}...q_{n_{k+1}}+\varepsilon_{n_k+2}q_{n_k+3}...q_{n_{k+1}}+...+\varepsilon_{n_{k+1}-1}q_{n_{k+1}}+\varepsilon_{n_{k+1}}}{q_{n_k+1}q_{n_k+2}...q_{n_{k+1}-1}}=const;
$$
\item $\lambda_k=\frac{\mu_k}{\mu}\lambda$, where $\mu=\min_{k\in\mathbb N}{\mu_k}$ and $\lambda$ is a number in the numerator of fraction, which denominator equals $(\mu_1+1)(\mu_2+~1)...(\mu+~1)$,  from sum \eqref{eq: Cantor series 6}.
\end{itemize}
\end{theorem}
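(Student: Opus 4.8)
The plan is to reduce the statement to the fixed-point situation already analysed for the compressed series \eqref{eq: Cantor series 6}. First I would record that $x$ is rational if and only if $x'$ is rational: by \eqref{eq: Cantor series 3} one has $x=\sum_{j=1}^{n_1}\frac{\varepsilon_j}{q_1\cdots q_j}+\frac{1}{q_1\cdots q_{n_1}}x'$ with $x'=\sigma^{n_1}(x)$, and since the finite sum and the factor $\frac{1}{q_1\cdots q_{n_1}}$ are rational and nonzero, rationality of $x$ and of $x'$ are equivalent. Next I would check that \eqref{eq: Cantor series 6} really is a Cantor series for the base sequence $(\mu_k+1)$, i.e. that each $\lambda_k\in\{0,1,\dots,\mu_k\}$: using $\varepsilon_i\le q_i-1$ the numerator $\lambda_k$ is bounded by a telescoping sum equal to $q_{n_k+1}\cdots q_{n_{k+1}}-1=\mu_k$, so the $\lambda_k$ are admissible digits.

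For necessity, suppose $x$ is rational. The necessity argument of Theorem \ref{th: the main theorem} produces, via $u_n\in\{0,1,\dots,v-1\}$ and the pigeonhole principle, a subsequence $(n_k)$ along which $\sigma^{n_k}(x)=const$. Choosing this very subsequence to build \eqref{eq: Cantor series 6}, the new-base shift satisfies $\sigma_{\mathrm{new}}^{k}(x')=\sigma^{n_{k+1}}(x)=const$ for every $k\ge 0$, which is exactly the fixed-point condition $\sigma^k(x')=x'$ for the series \eqref{eq: Cantor series 6}. Applying the lemma that $\sigma^k(x')=x'$ for all $k$ forces $\frac{\lambda_k}{\mu_k}=const$ (the analogue of \eqref{eq: Cantor series 5} with $q_n-1$ replaced by $(\mu_k+1)-1=\mu_k$) yields the first bullet. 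Applying the lemma characterising such fixed points, with $q=\min_k(\mu_k+1)=\mu+1$ and numerator $\lambda$, then gives the integrality relation $\lambda_k=\frac{\mu_k}{\mu}\lambda$, which is the second bullet.

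For sufficiency, I would run the same chain backwards. Given a subsequence $(n_k)$ for which both bullets hold, note that the second bullet already implies the first, since $\lambda_k=\frac{\mu_k}{\mu}\lambda$ gives $\frac{\lambda_k}{\mu_k}=\frac{\lambda}{\mu}=const$; moreover it guarantees that every $\lambda_k\in\mathbb Z_0$ is an admissible digit. The sufficiency direction of the fixed-point lemma, applied to the base sequence $(\mu_k+1)$ with minimal element $\mu+1$, then shows $\sigma^k(x')=x'=\frac{\lambda}{\mu}$ for all $k$, so $x'$ is rational, and by the first paragraph $x$ is rational.

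The step I expect to be the main obstacle is the bookkeeping of the reindexing: verifying precisely that the constancy of $\sigma^{n}(x)$ along the subsequence $(n_k)$ translates into the full fixed-point condition $\sigma^k(x')=x'$ for the compressed base $(\mu_k+1)$, and that \eqref{eq: Cantor series 6} is a bona fide Cantor series with digits in $\{0,1,\dots,\mu_k\}$. Once this identification is made, every remaining implication is a direct application of Proposition \ref{pr: Cantor series} and the two preceding lemmas on fixed points of the shift operator.
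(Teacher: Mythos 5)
Your argument is correct and is essentially the proof the paper intends: the paper constructs the compressed series \eqref{eq: Cantor series 6} from a subsequence with $\sigma^{n_k}(x)=const$ and then asserts the theorem "clearly" follows from the fixed-point lemmas, which is exactly the reduction you carry out. You merely supply the details the paper omits (the digit bound $\lambda_k\le\mu_k$ and the identification $\sigma^{k}_{\mathrm{new}}(x')=\sigma^{n_{k+1}}(x)$), so there is no substantive difference in approach.
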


Let  $x$ be a rational number then 
$$
x=\sum^{\infty} _{k=1}{\frac{\varepsilon_k}{q_1q_2...q_k}}=
$$
$$
=\sum^{n} _{i=1}{\frac{\varepsilon_i}{q_1q_2...q_i}}+\left(\frac{\varepsilon_{n+1}}{q_1q_2...q_{n+1}}+...+\frac{\varepsilon_{n+m}}{q_1q_2...q_nq_{n+1}...q_{n+m}}\right)+\sum^{\infty} _{j=1}{\frac{\varepsilon_{n+m+j}}{q_1q_2...q_{n+m+j}}}=
$$
$$
=\sum^{n} _{i=1}{\frac{\varepsilon_i}{q_1q_2...q_i}}+\frac{\varepsilon_{n+1}q_{n+2}...q_{n+m}+...+\varepsilon_{n+m-1}q_{n+m}+\varepsilon_{n+m}}{q_1q_2...q_n(q_{n+1}...q_{n+m})}+\sum^{\infty} _{j=1}{\frac{\varepsilon_{n+m+j}}{q_1q_2...q_{n+m+j}}}.
$$
From Theorem  \ref{th: the main theorem} it follows that
$$
\sigma^n(x)=\sigma^{n+m}(x)=\frac{\varepsilon_{n+1}q_{n+2}...q_{n+m}+...+\varepsilon_{n+m-1}q_{n+m}+\varepsilon_{n+m}}{q_{n+1}...q_{n+m}-1}.
$$

Hence the following statement is true.
\begin{theorem}
If the number $x$ represented by Cantor series \eqref{eq: Cantor series} is rational $(x=\frac{u}{v})$, then there exist $n\in\mathbb Z_0$ and $m\in\mathbb N$ such that 
$$
q_1q_2...q_n(q_{n+1}q_{n+2}...q_{n+m}-1)\equiv 0\pmod{v}.
$$
\end{theorem}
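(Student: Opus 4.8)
The plan is to obtain this statement as a direct consequence of Theorem~\ref{th: the main theorem} together with the explicit formula \eqref{eq: Cantor series 4} for the iterated shifts. Since $x=\frac{u}{v}$ is rational with $(u,v)=1$, Theorem~\ref{th: the main theorem} guarantees the existence of $n\in\mathbb Z_0$ and $m\in\mathbb N$ for which $\sigma^n(x)=\sigma^{n+m}(x)$. These are exactly the indices that should appear in the desired congruence, so the whole argument reduces to translating the equality of two shifts into an arithmetic statement modulo $v$.

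First I would record, from \eqref{eq: Cantor series 4}, that $\sigma^n(x)=\frac{u_n}{v}$ with
$$
u_n=uq_1q_2\ldots q_n-v\left(\varepsilon_1q_2\ldots q_n+\ldots+\varepsilon_{n-1}q_n+\varepsilon_n\right),
$$
and similarly $\sigma^{n+m}(x)=\frac{u_{n+m}}{v}$ with $u_{n+m}=uq_1q_2\ldots q_{n+m}-v(\ldots)$. In particular each $u_n$ is an integer, and since the subtracted summand is a multiple of $v$, one has $u_n\equiv uq_1q_2\ldots q_n\pmod{v}$ and $u_{n+m}\equiv uq_1q_2\ldots q_{n+m}\pmod{v}$.

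Next, the hypothesis $\sigma^n(x)=\sigma^{n+m}(x)$ forces $\frac{u_n}{v}=\frac{u_{n+m}}{v}$, hence $u_n=u_{n+m}$ as integers. Reducing this equality modulo $v$ and inserting the two congruences just obtained yields
$$
uq_1q_2\ldots q_n\equiv uq_1q_2\ldots q_{n+m}\pmod{v},
$$
which I would rewrite, after factoring $q_1q_2\ldots q_n$ out of the difference of the two sides, as
$$
uq_1q_2\ldots q_n\left(q_{n+1}q_{n+2}\ldots q_{n+m}-1\right)\equiv 0\pmod{v}.
$$

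Finally, since $(u,v)=1$, the factor $u$ may be cancelled from the congruence, giving precisely $q_1q_2\ldots q_n(q_{n+1}\ldots q_{n+m}-1)\equiv 0\pmod{v}$. The only point requiring care — and the closest thing to an obstacle — is justifying that equality of the real numbers $\sigma^n(x)$ and $\sigma^{n+m}(x)$ is equivalent to equality of the integer numerators $u_n$ and $u_{n+m}$ over the \emph{same} fixed denominator $v$, and that the coprimality hypothesis legitimately licenses the cancellation of $u$; once \eqref{eq: Cantor series 4} is available both are immediate, and no estimation or limiting argument is needed.
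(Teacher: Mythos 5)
Your proof is correct, and it rests on the same two pillars as the paper's argument --- the necessity direction of Theorem~\ref{th: the main theorem}, which supplies $n$ and $m$ with $\sigma^n(x)=\sigma^{n+m}(x)$, and formula \eqref{eq: Cantor series 4} --- but the final extraction of the congruence is done differently. The paper evaluates the eventually periodic tail explicitly, obtaining
$$
\sigma^n(x)=\sigma^{n+m}(x)=\frac{\varepsilon_{n+1}q_{n+2}\ldots q_{n+m}+\ldots+\varepsilon_{n+m-1}q_{n+m}+\varepsilon_{n+m}}{q_{n+1}\ldots q_{n+m}-1},
$$
and then concludes with a bare ``hence,'' leaving implicit the comparison of this representation with $\sigma^n(x)=\frac{u_n}{v}$, the cross-multiplication, and the cancellation of $u$. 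You never compute the value of the periodic block at all: you use only that $u_n=u_{n+m}$ as integers over the common denominator $v$, reduce modulo $v$, factor out $q_1q_2\ldots q_n$, and cancel $u$ by coprimality. This is slightly more elementary (no summation of the periodic part is needed) and makes explicit exactly where $(u,v)=1$ enters --- a hypothesis used in the proof of Theorem~\ref{th: the main theorem} but not stated in the present theorem, so you should record explicitly that $\frac{u}{v}$ is taken in lowest terms. Apart from that, the two arguments are equivalent and yours fills in the step the paper omits.
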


\end{document}